\newtheorem{theorem}{Theorem}[section]
\newtheorem{lemma}[theorem]{Lemma}
\theoremstyle{definition}
\newtheorem{definition}[theorem]{Definition}
\newtheorem{proposition}[theorem]{Proposition}
\newtheorem{corollary}[theorem]{Corollary}
\newtheorem{example}[theorem]{Example}
\numberwithin{equation}{section}
\theoremstyle{remark}
\newtheorem{remark}[theorem]{Remark}
\numberwithin{equation}{section}
\def\depth{\operatorname{depth}}
\def\eh{\operatorname{H}}
\def\edim{\operatorname{edim}}
\def\im{\operatorname{image}}
\def\tor{\operatorname{Tor}}
\def\eff{\operatorname{\Phi}}
\def\rmod{\operatorname{\textbf{R-Mod}}}
\def\flatdim{\operatorname{flat\ dim_\mathit{R}}}
\def\Fin{\operatorname{\varphi_{\varstar}^{\mathit{n}}}}
\def\Ef{\operatorname{\mathit{f}_{\varstar}}}
\begin{document}

\title{Kunz' Regularity Criterion via algebraic entropy}

\author{Mahdi Majidi-Zolbanin}
\address{Department of Mathematics, LaGuardia Community College of the City University of New York, 31-10 Thomson Avenue, Long Island City, NY 11101}
\email{mmajidi-zolbanin@lagcc.cuny.edu}

\author{Nikita Miasnikov}
\address{Department of Mathematics,The Graduate Center of the City University of New York, 365 Fifth Avenue, New York, NY 10016}
\email{n5k5t5@gmail.com}

\author{Lucien Szpiro}
\address{Department of Mathematics,The Graduate Center of the City University of New York, 365 Fifth Avenue, New York, NY 10016}
\email{lszpiro@gc.cuny.edu}
\thanks{The second and third authors received funding from the NSF Grants DMS-0854746 and DMS-0739346.}

\subjclass[2000]{Primary 13H05, 13B40; Secondary 13D40, 37P99}

\date{October 5, 2011.}

\dedicatory{}

\keywords{Algebraic entropy, Kunz' Regularity Criterion, Dynamics, Flat self-maps}

\begin{abstract}
In~\cite{we2} we introduced and studied a notion of algebraic entropy. In this paper we will give an application of algebraic entropy in proving Kunz' Regularity Criterion for all contracting self-maps of finite length of Noetherian local rings in arbitrary characteristic. Some conditions of Kunz' Criterion have already been extended to the general case in~\cite{AvrIynMill06}, using different methods.
\end{abstract}

\maketitle

\section{Introduction and notations}\label{Introduction}
Let \((R,\mathfrak{m})\) be a Noetherian local ring of positive prime characteristic $p$ and of dimension $d$. In~\cite{Kunz1} Kunz showed that the following conditions are equivalent:\smallskip

\begin{compactenum}
\item[\textbf{a})] $R$ is regular.
\item[\textbf{b})] The Frobenius endomorphism $f:R\rightarrow R$ is flat.
\item[\textbf{c})] The length $\ell_R\big(R/f(\mathfrak{m})R\big)$ is equal to $p^d$.
\item[\textbf{d})] The length $\ell_R\big(R/f^n(\mathfrak{m})R\big)$ is equal to $p^{nd}$ for some $n\in\mathbb{N}$.
\end{compactenum}\vspace{0.2cm}

Later, in~\cite{Rodic}, Rodicio showed these conditions are also equivalent to\smallskip

\begin{compactenum}
\item[\textbf{b$^\prime$})] $\Ef R$ has finite flat dimension over $R$.
\end{compactenum}\vspace{0.2cm}

At first glance, Kunz' conditions \textbf{c}) and \textbf{d}) appear to be stated in terms of the characteristic $p$ of the ring and one may not expect to be able to extend them, or even state them in arbitrary characteristic. Our goal in this paper is to use the notion of \emph{algebraic entropy}, that was introduced and studied in~\cite{we2}, to make sense of Kunz' conditions \textbf{c}) and \textbf{d}) for all self-maps of finite length (see Definition~\ref{deflambda}) of Noetherian local rings in any characteristic. We will then show that with this new interpretation, all conditions in Kunz' result are still equivalent. We should note that Avramov, Iyengar and Miller have already extended the equivalence of conditions \textbf{a}) and \textbf{b}) of Kunz and \textbf{b$^\prime$}) of Rodicio to all \emph{contracting} local self-maps of Noetherian local rings in any characteristic, in~\cite{AvrIynMill06}. In order to state their result, we shall first recall the definition of a contracting self-map.
\begin{definition}[\cite{AvrIynMill06}, p.~80]\label{contracting}
A local self-map \(\varphi\) of a local ring \((R,\mathfrak{m})\) is said to be \emph{contracting}, if for each element \(x\in\mathfrak{m}\) the sequence \(\{\varphi^n(x)\}_{i\geq1}\) converges to \(0\) in the \(\mathfrak{m}\)-adic topology of \(R\).
\end{definition}
\begin{remark}[\cite{AvrIynMill06}, Lemma~12.1.4]\label{alternative}
A local self-map \(\varphi\) of a Noetherian local ring \((R,\mathfrak{m})\) is contracting if and only if \(\varphi^{\edim(R)}(\mathfrak{m})R\subset\mathfrak{m}^2\), where \(\edim(R)\) is the embedding dimension of \(R\).
\end{remark}
Here we state (part of) the result obtained by Avramov, Iyengar and Miller. 
\begin{theorem}[{\cite[Theorem~13.3]{AvrIynMill06}}]\label{theirs}
Let $(R,\mathfrak{m})$ be a Noetherian local ring, and let $\varphi$ be a contracting local self-map of $R$. Then the following conditions are equivalent:\smallskip

\begin{compactenum}
\item[\emph{\textbf{1})}] $R$ is regular.
\item[\emph{\textbf{2})}] $\flatdim \Fin R=\dim R/(\varphi^n(\mathfrak{m})R)$ for all integers $n\geq1$.
\item[\emph{\textbf{3})}] $\flatdim \Fin R<\infty$ for some integer $n\geq1$.
\end{compactenum}
\end{theorem}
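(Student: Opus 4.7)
The plan is to establish the standard chain $(1) \Rightarrow (2) \Rightarrow (3) \Rightarrow (1)$, with only the middle implication being immediate.

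For $(1) \Rightarrow (2)$: Assume $R$ is regular. The first step is to generalize Kunz' flatness theorem to this setting: any contracting local self-map $\varphi^n$ of a regular local ring is flat. Since $R$ has finite global dimension, the local criterion of flatness reduces flatness of $\Fin R$ over $R$ to the vanishing $\tor^R_1(k, \Fin R) = 0$, which can be extracted from the Koszul complex on a regular system of parameters together with the contracting hypothesis. Once flatness is known, $\flatdim_R \Fin R = 0$, and the dimension formula for flat local homomorphisms applied to $\varphi^n \colon R \to \Fin R$ gives $\dim R = \dim R + \dim R/(\varphi^n(\mathfrak{m})R)$, forcing the closed fiber to have dimension zero, which matches $\flatdim_R \Fin R$.

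For $(3) \Rightarrow (1)$, the main obstacle: assume $\flatdim_R \Fin R < \infty$ for some $n$. By Remark~\ref{alternative}, after replacing $n$ with $n \cdot \edim(R)$ I may assume $\varphi^n(\mathfrak{m})R \subseteq \mathfrak{m}^2$. The target is $\flatdim_R k < \infty$, from which regularity follows by Auslander--Buchsbaum--Serre. Let $F_\bullet$ be the minimal $R$-free resolution of $k$ with Betti numbers $\beta_i$. In $F_\bullet \otimes_R \Fin R$, each differential has matrix entries in $\mathfrak{m}$ acting on $\Fin R$ as elements of $\varphi^n(\mathfrak{m})R \subseteq \mathfrak{m}^2$. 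I would then feed this into the base-change spectral sequence
\[
E^2_{p,q} = \tor^R_p\bigl(\tor^R_q(k, \Fin R),\, k\bigr) \Longrightarrow \tor^R_{p+q}(k, k),
\]
together with the truncation $\tor^R_q(k, \Fin R) = 0$ for $q > \flatdim_R \Fin R$ and the $\mathfrak{m}^2$-containment, to bound $\beta_n$ by a finite-depth linear combination of earlier Betti numbers. The strong contracting condition forces this recurrence to eventually vanish, yielding $\beta_n = 0$ for $n \gg 0$ and hence $\flatdim_R k < \infty$.

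The principal difficulty is precisely this last comparison: converting homological finiteness of $\Fin R$ into homological finiteness of $k$. The strengthened contracting condition $\varphi^n(\mathfrak{m}) \subseteq \mathfrak{m}^2$ — not mere contracting — is what makes the comparison work, which is why Remark~\ref{alternative} is indispensable. In AIM's original proof this passage is handled via derived-category machinery and big Cohen--Macaulay modules; any more direct approach must cope with the same essential obstacle of extracting information about the minimal resolution of $k$ out of homological data attached only to $\Fin R$.
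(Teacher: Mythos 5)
This theorem is not proved in the paper at all: it is quoted from \cite{AvrIynMill06}, and the paper's own result (Theorem~\ref{main}) concerns only self-maps of \emph{finite length}, proved by elementary means (Nagata's criterion and Herzog's depth argument). So your sketch must stand on its own, and it does not. The implication 1) $\Rightarrow$ 2) rests on a false claim: a contracting local self-map of a regular local ring need not be flat. On $R=k[[x,y]]$ the substitution $\varphi(x)=x^{2}$, $\varphi(y)=0$ is contracting but kills $y$, hence is neither injective nor flat; one checks $\flatdim \Fi R=1=\dim R/\varphi(\mathfrak{m})R$, exactly as 2) predicts. Your argument moreover concludes that the closed fibre is zero-dimensional, i.e.\ at best it proves the finite-length case (which is the paper's \textbf{a}) $\Rightarrow$ \textbf{b})), whereas statement 2) is asserted for arbitrary contracting maps and allows $\dim R/(\varphi^{n}(\mathfrak{m})R)>0$. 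What regularity gives for free is only $\flatdim \Fin R\le\dim R<\infty$; the equality with the fibre dimension is a separate computation (via depth/flat-dimension formulas valid for modules that need not be finitely generated), not a consequence of flatness.

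The implication 3) $\Rightarrow$ 1), which you yourself identify as the main obstacle, is not actually argued. First, replacing $n$ by $n\cdot\edim(R)$ tacitly uses that finiteness of $\flatdim \Fin R$ persists for iterates; this is true (one has $\flatdim \Finm R\le\flatdim \Fin R+\flatdim \Fim R$, by restricting a flat resolution along $\varphi^{n}$), but it must be stated. More seriously, your spectral sequence does not abut to $\tor_{p+q}^{R}(k,k)$: the relevant object is $k\otimes^{\mathbf{L}}_{R}\Fin R\otimes^{\mathbf{L}}_{R}k$, with the two different $R$-structures on $\Fin R$, and its homology is $\tor^{R}_{*}(k,\Fin k)$, a twisted variant of $\tor^{R}_{*}(k,k)$. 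Even after this correction the plan collapses at the decisive step: the inner terms $\tor_{q}^{R}(k,\Fin R)$ (for instance $R/\varphi^{n}(\mathfrak{m})R$ when $q=0$) are not killed by $\mathfrak{m}$ and generically have infinite projective dimension, so the $E^{2}$-page is an infinite strip of height $\flatdim \Fin R$ and no vanishing of Betti numbers of $k$ can be read off; ``the recurrence eventually vanishes'' is an assertion, not an argument, and it is precisely the content of the theorem. For comparison, the one elementary argument in this circle of ideas --- Herzog's depth count on $\eff^{n}$ of a minimal resolution, reproduced in the paper for \textbf{b}) $\Rightarrow$ \textbf{a}) --- needs exactness of $\eff^{n}(L_{\bullet})$, i.e.\ flatness; weakening flatness to finite flat dimension in the contracting setting is where the genuinely hard work of \cite{AvrIynMill06} lies, and nothing in your sketch replaces it.
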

In order to state our main result, we shall first recall a number of definitions, terminology and notation, that will be used throughout this paper.
\begin{definition}[{\cite[Definition~2.1]{we2}}]\label{deflambda}
Let \(f:(R,\mathfrak{m})\rightarrow(S,\mathfrak{n})\) be a homomorphism of Noetherian local rings. We say $f$ is \emph{of finite length}, if it is local and \(f(\mathfrak{m})S\) is \(\mathfrak{n}\)-primary. In this case we define the length of $f$ as \(\lambda(f):=\ell_S\big(S/f(\mathfrak{m})S\big)\). It is clear that \(\lambda(f)\in[1,\infty)\).
\end{definition}
\begin{remark}
For local homomorphisms of Noetherian local rings:
\begin{center} 
Finite $\Rightarrow$ Integral $\Rightarrow$ Finite length.
\end{center} 
The converse need not be true.
\end{remark}
In~\cite{we2} we introduced and studied a notion of algebraic entropy for self-maps of finite length of Noetherian local rings. Here we recall its definition.
\begin{definition}
Let \((R,\mathfrak{m})\) be a Noetherian local ring, let \(\varphi\) be a self-map of finite length of \(R\). We define the \emph{algebraic entropy} of \(\varphi\) as 
\begin{equation}\label{entropy limit}
h_{\mathrm{alg}}(\varphi,R):=\lim_{n\rightarrow\infty}\frac{\log\lambda(\varphi^n)}{n}.
\end{equation}
\end{definition}
\begin{remark}\label{infimum}
The limit in Equation~\ref{entropy limit} always exists and is finite. Furthermore, the sequence $\{\log\lambda(\varphi^n)/n\}$ always converges to its infimum. See~\cite[Theorem~3.2]{we2}.
\end{remark}
\begin{example}
In a Noetherian local ring $R$ of positive prime characteristic $p$ and dimension $d$, the algebraic entropy of the Frobenius endomorphism is equal to $d\cdot\log p$. See~\cite[Example~3.6]{we2}.
\end{example}
Our main result in this paper is the following:
\begin{theorem}\label{main}
Let $\varphi$ be a self-map of finite length of a Noetherian local ring $(R,\mathfrak{m})$ of arbitrary characteristic. Define $q(\varphi):=\exp(h_{\mathrm{alg}}(\varphi,R))$. Consider the following conditions:\smallskip

\begin{compactenum}
\item[\emph{\textbf{a})}] $R$ is regular.
\item[\emph{\textbf{b})}] $\varphi:R\rightarrow R$ is flat.
\item[\emph{\textbf{c})}] $\lambda(\varphi)=q(\varphi)$.
\item[\emph{\textbf{d})}] $\lambda(\varphi^n)=q(\varphi)^n$ for some $n\in\mathbb{N}$.
\end{compactenum}\vspace{0.2cm}

\noindent Then \emph{\textbf{a})} $\Rightarrow$ \emph{\textbf{b})} $\Rightarrow$ \emph{\textbf{c})} $\Rightarrow$ \emph{\textbf{d})}. If in addition $\varphi$ is contracting, then \emph{\textbf{d})} $\Rightarrow$ \emph{\textbf{b})} $\Rightarrow$ \emph{\textbf{a})}. That is, in this case all conditions are equivalent.
\end{theorem}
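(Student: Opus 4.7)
The plan is to prove the six implications as a chain, with the first four (including $(b)\Rightarrow(a)$ under contracting) being relatively standard applications of Auslander--Buchsbaum and Theorem~\ref{theirs}, and the new content concentrated in $(d)\Rightarrow(b)$ under contracting.

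For $(a)\Rightarrow(b)$, view $\Fi R$ as an $R$-module: since $\varphi(\mathfrak{m})R$ is $\mathfrak{m}$-primary, the $\mathfrak{m}$-depth of $\Fi R$ equals $\depth R=d$, and combined with the fact that $R$ regular forces $\mathrm{pd}_R\Fi R<\infty$, the Auslander--Buchsbaum formula gives $\mathrm{pd}_R\Fi R=0$, so $\varphi$ is flat. For $(b)\Rightarrow(c)$, if $\varphi$ is flat then a composition series induction gives the base-change identity $\ell_R(M\otimes_R\Fi R)=\ell_R(M)\cdot\lambda(\varphi)$ for any finite length $M$; applied iteratively to $M=R/\varphi^{n-1}(\mathfrak{m})R$ (whose tensor with $\Fi R$ is $R/\varphi^n(\mathfrak{m})R$), it yields $\lambda(\varphi^n)=\lambda(\varphi)^n$, hence $q(\varphi)=\lambda(\varphi)$. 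The implication $(c)\Rightarrow(d)$ is immediate ($n=1$). For $(b)\Rightarrow(a)$ under contracting, $\varphi$ flat implies $\flatdim\Fin R=0<\infty$, and Theorem~\ref{theirs} concludes that $R$ is regular.

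For the main step $(d)\Rightarrow(b)$ under contracting, first reduce to the multiplicative case: from $\lambda(\varphi^n)=q(\varphi)^n$ for some $n$, submultiplicativity together with Remark~\ref{infimum} (convergence to the infimum) forces $\lambda(\varphi^{kn})=q(\varphi)^{kn}=\lambda(\varphi^n)^k$ for every $k\geq 1$. Setting $\psi:=\varphi^n$, still contracting and of finite length, we have $\lambda(\psi^k)=\lambda(\psi)^k$ for all $k$. It now suffices to show $\flatdim_R\Fis R<\infty$: Theorem~\ref{theirs} will give $R$ regular, and $(a)\Rightarrow(b)$ applied to $\varphi$ itself will yield flatness. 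To prove $\flatdim_R\Fis R<\infty$, I would exploit the sharp inequality $\ell_R(M\otimes_R\Fis R)\leq\ell_R(M)\cdot\lambda(\psi)$ (valid for any finite length $M$), whose defect along a short exact sequence $0\to M'\to M\to k\to 0$ is measured by the image of $\tor_1^R(k,\Fis R)$ in $M'\otimes_R\Fis R$. Applied to the tower $M_k=R/\psi^k(\mathfrak{m})R$, the forced equality $\ell_R(M_k\otimes_R\Fis R)=\lambda(\psi)^{k+1}=\ell_R(M_k)\cdot\lambda(\psi)$ makes these defects vanish throughout each composition series of $M_k$, and the contracting hypothesis (which yields $\bigcap_k\psi^k(\mathfrak{m})R=0$ by Krull) then promotes this vanishing to $\tor_i^R(k,\Fis R)=0$ for $i\geq 1$.

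The main obstacle is this last step, namely extracting Tor-vanishing from the multiplicativity of $\lambda$ along the contracting tower. The contracting hypothesis is indispensable: the self-map of $k[[x,y]]/(xy)$ defined by $x\mapsto x^2$, $y\mapsto y$ satisfies $\lambda(\varphi^k)=\lambda(\varphi)^k$ for all $k$ yet is neither flat nor contracting. Converting the entropic equality into homological vanishing is where the framework of algebraic entropy from~\cite{we2} plays its essential role.
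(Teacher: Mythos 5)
Your reduction to $\psi:=\varphi^n$ with $\lambda(\psi^k)=\lambda(\psi)^k$ for all $k$ is exactly Lemma~\ref{forinj}(i), and the subadditivity/defect computation along the tower $M_t=R/\psi^t(\mathfrak{m})R$ is sound; but the argument stops at what you yourself call the ``main obstacle,'' and that obstacle is the actual content of \textbf{d}) $\Rightarrow$ \textbf{b}). What your defect argument yields is only this: for each short exact sequence $0\to N'\to N\to k\to 0$ ($k=R/\mathfrak{m}$) occurring inside some $M_t$, the connecting map $\tor_1^R(k,\Fis R)\to N'\otimes_R\Fis R$ vanishes. That kills certain \emph{images} of $\tor_1^R(k,\Fis R)$, not the module itself, and ``contracting plus Krull gives $\bigcap_t\psi^t(\mathfrak{m})R=0$'' does not bridge the difference: the intersection one must control lives in $\mathfrak{m}\otimes_R\Fis R$, not in $R$. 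A genuine completion would, for instance, map $0\to\mathfrak{m}\to R\to k\to 0$ onto $0\to\mathfrak{m}/\psi^t(\mathfrak{m})R\to M_t\to k\to 0$, note that $\tor_1^R(k,\Fis R)$ embeds in $\mathfrak{m}\otimes_R\Fis R$ because $R$ is free, deduce from your vanishing of the connecting maps that this image is contained in the image of $\psi^t(\mathfrak{m})R\otimes_R\Fis R\to\mathfrak{m}\otimes_R\Fis R$ for every $t$, then use Remark~\ref{alternative} (which gives $\psi^t(\mathfrak{m})R\subset\mathfrak{m}^{N(t)}$ with $N(t)\to\infty$) to place it inside $\mathfrak{m}^{N(t)-1}\cdot(\mathfrak{m}\otimes_R\Fis R)$ for the module structure coming from the target copy of $R$, and finish by Krull intersection on the finitely generated module $\mathfrak{m}\otimes_R\Fis R$. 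Note also that $\Fis R$ is in general \emph{not} a finitely generated $R$-module (finite length does not imply finite), so neither ``$\tor_1^R(k,-)=0$ implies flat'' nor ``$\tor_i^R(k,-)=0$ for $i\geq1$ implies finite flat dimension'' is automatic; one needs the local flatness criterion in its ideally separated form, which does apply because $\Fis R$ is cyclic over the target ring and $\psi(\mathfrak{m})R\subset\mathfrak{m}$. For comparison, the paper closes this step by first proving $\varphi$ injective (Lemma~\ref{forinj}(ii), resting on the entropy identity $h_{\mathrm{alg}}(\varphi,R)=h_{\mathrm{alg}}(\overline{\varphi},R/\ker\varphi)$ from~\cite{we2}) and then feeding the length identity $\ell_R(R/\varphi^n(\mathfrak{q})R)=\ell_R(R/\mathfrak{q})\cdot\lambda(\varphi^n)$, derived for every $\mathfrak{m}$-primary $\mathfrak{q}$ by the same kind of computation you set up, into Nagata's criterion (Lemma~\ref{Nagata}); your Tor route, once completed as above, would in fact avoid the injectivity step, but as written it is not a proof.

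There is also a flaw in your \textbf{a}) $\Rightarrow$ \textbf{b}): the Auslander--Buchsbaum formula is a statement about finitely generated modules, and $\Fi R$ need not be one; for non-finitely generated modules the formula fails (over a discrete valuation ring the fraction field has projective dimension $1$ and depth $\infty$), so ``$\depth(\mathfrak{m},\Fi R)=\depth R$ and finite projective dimension force projective dimension $0$'' is not available. The paper instead deduces flatness from $\dim R=\dim R+\dim R/\varphi(\mathfrak{m})R$ together with the flatness theorem for maps with regular source and Cohen--Macaulay target (Matsumura, Theorem~23.1), which requires no module-finiteness; you should substitute that argument. Your remaining steps, namely \textbf{b}) $\Rightarrow$ \textbf{c}) via multiplicativity of $\lambda$ under flatness, \textbf{c}) $\Rightarrow$ \textbf{d}), and quoting Theorem~\ref{theirs} for \textbf{b}) $\Rightarrow$ \textbf{a}) in place of the paper's Herzog-style argument, are fine, and your $k[[x,y]]/(xy)$ example correctly shows that the contracting hypothesis cannot be dropped.
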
In our proof of Theorem~\ref{main}, we will use a proof of Herzog originally written in~\cite[Satz~3.1]{Herzog} for the Frobenius endomorphism, to prove the implication \textbf{b}) $\Rightarrow$ \textbf{a}) in the general setting. This part of our proof, however, is not new and has already appeared in~\cite[Lemma~3]{BrunGubel}.\par
From Theorem~\ref{main} we quickly see that the Hilbert-Kunz multiplicity of a regular local ring with respect to an arbitrary self-map of finite length, as defined in~\cite[Definition~3.7]{we2}, is $1$. This is well-known in the case of the Frobenius endomorphism.\bigskip

\noindent \textbf{Notations.} All rings in this paper are assumed to be Noetherian, commutative and with identity element. By a self-map of a ring we mean an endomorphism of that ring. For a self-map \(\varphi\) of a ring we will write \(\varphi^n\) for the \(n\)-fold composition of \(\varphi\) with itself. Given a ring homomorphism \(f:R\rightarrow S\) and an \(S\)-module \(N\), we will denote by \(\Ef N\) the \(R\)-module obtained by restriction of scalars. That is, \(\Ef N\) is the \(R\)-module whose underlying abelian group is \(N\) and whose \(R\)-module structure is given by \(r\cdot x=f(r)\:x\), for \(r\in R\) and \(x\in\Ef N\). This notation is consistent with the one used in~\cite{Bourb}. If \(M\) is an \(R\)-module of finite length, we will denote its length by \(\ell_R(M)\). 
\section{Preliminaries}

\begin{proposition}[{\cite[Proposition~2.9]{we2}}]\label{flatchange}
Assume that \(f:(R,\mathfrak{m})\rightarrow(S,\mathfrak{n})\) is a homomorphism of finite length of Noetherian local rings, and let \(M\) be an \(R\)-module of finite length. Then\smallskip

\begin{compactenum}
\item[\textbf{a})] \(M\otimes_RS\) is an \(S\)-module of finite length.
\item[\textbf{b})] In general \(\ell_S(M\otimes_R S)\leq\lambda(f)\cdot\ell_R(M)\).
\item[\textbf{c})] If in addition \(f\) is \emph{flat}, then \(\ell_S(M\otimes_R S)=\lambda(f)\cdot\ell_R(M)\).
\end{compactenum}
\end{proposition}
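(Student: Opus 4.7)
The plan is to reduce the statement to the behavior of tensor products on successive quotients of a composition series of \(M\). Since \(M\) is an \(R\)-module of finite length with \(R\) local, pick a composition series
\[
0 = M_0 \subset M_1 \subset \cdots \subset M_n = M, \qquad n=\ell_R(M),
\]
in which each successive quotient satisfies \(M_i/M_{i-1}\cong R/\mathfrak{m}\). Applying \(-\otimes_R S\) gives a chain of \(S\)-modules whose transition maps may fail to be injective, so I would let \(N_i\) denote the image of \(M_i\otimes_R S\) in \(M\otimes_R S\); this yields an actual filtration \(0 = N_0 \subset N_1 \subset \cdots \subset N_n = M\otimes_R S\).

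The key computation is the identification of subquotients. Right exactness of \(-\otimes_R S\) applied to \(0\to M_{i-1}\to M_i\to R/\mathfrak{m}\to 0\) produces a surjection \((R/\mathfrak{m})\otimes_R S \twoheadrightarrow N_i/N_{i-1}\). Now \((R/\mathfrak{m})\otimes_R S\cong S/f(\mathfrak{m})S\), and since \(f\) is of finite length this module has length \(\lambda(f)\). Consequently each \(N_i/N_{i-1}\) is an \(S\)-module of length at most \(\lambda(f)\), proving that \(M\otimes_R S\) has finite length, which is part \textbf{a}), and giving
\[
\ell_S(M\otimes_R S) \;=\; \sum_{i=1}^n \ell_S(N_i/N_{i-1}) \;\leq\; n\cdot\lambda(f) \;=\; \ell_R(M)\cdot\lambda(f),
\]
which is part \textbf{b}).

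For part \textbf{c}), the flatness hypothesis lets me replace right exactness by exactness: applying \(-\otimes_R S\) to \(0\to M_{i-1}\to M_i\to R/\mathfrak{m}\to 0\) yields a short exact sequence of \(S\)-modules, so the surjection \((R/\mathfrak{m})\otimes_R S \to N_i/N_{i-1}\) is now an isomorphism, and hence \(\ell_S(N_i/N_{i-1})=\lambda(f)\) for every \(i\). Summing gives equality \(\ell_S(M\otimes_R S)=\ell_R(M)\cdot\lambda(f)\). There is no genuine obstacle here; the only subtle point worth stating cleanly is that one must pass to the images \(N_i\) rather than pretending \(M_{i-1}\otimes_R S\hookrightarrow M_i\otimes_R S\) in the non-flat setting, and that the identification \((R/\mathfrak{m})\otimes_R S \cong S/f(\mathfrak{m})S\) is exactly what couples \(\lambda(f)\) to the length computation.
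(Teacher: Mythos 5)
Your argument is correct: the d\'evissage through a composition series, with the care you take to pass to the images \(N_i\) rather than assuming injectivity of \(M_{i-1}\otimes_R S\to M_i\otimes_R S\), and the identification \((R/\mathfrak{m})\otimes_R S\cong S/f(\mathfrak{m})S\) of length \(\lambda(f)\), gives exactly parts \textbf{a}), \textbf{b}), and (using flatness to upgrade the surjections onto subquotients to isomorphisms) \textbf{c}). Note that this paper does not prove the proposition but imports it from \cite[Proposition~2.9]{we2}; your composition-series reduction is the standard argument and is essentially the one carried out there, so there is nothing to add.
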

\begin{proposition}[{\cite[Corollary~2.10]{we2}}]\label{magic}
Assume that \(f:(R,\mathfrak{m})\rightarrow(S,\mathfrak{n})\) and \(g:(S,\mathfrak{n})\rightarrow(T,\mathfrak{p})\) are homomorphisms of finite length of Noetherian local rings. Then with notation of Definition~\ref{deflambda},\smallskip 

\begin{compactenum}
\item[\textbf{a})] In general \(\lambda(g)\leq\lambda(g\circ f)\leq\lambda(g)\cdot\lambda(f)\).
\item[\textbf{b})] If in addition \(g\) is \emph{flat}, then \(\lambda(g\circ f)=\lambda(g)\cdot\lambda(f)\).
\end{compactenum}
\end{proposition}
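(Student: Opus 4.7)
The plan is to derive both parts from a single application of Proposition~\ref{flatchange}, taking as input the map $g:S\to T$ and the $S$-module $M:=S/f(\mathfrak{m})S$, which by Definition~\ref{deflambda} has length $\ell_S(M)=\lambda(f)$ over $S$. The key identification is the canonical isomorphism of $T$-modules $(S/f(\mathfrak{m})S)\otimes_S T \cong T/(g\circ f)(\mathfrak{m})T$, which converts $\ell_T(M\otimes_S T)$ directly into $\lambda(g\circ f)$.

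Before invoking Proposition~\ref{flatchange} I would first check that $g\circ f$ is itself a homomorphism of finite length, so that $\lambda(g\circ f)$ is defined. Choosing integers $N,K$ with $\mathfrak{n}^N\subseteq f(\mathfrak{m})S$ and $\mathfrak{p}^K\subseteq g(\mathfrak{n})T$, the chain $\mathfrak{p}^{KN}\subseteq (g(\mathfrak{n})T)^N = g(\mathfrak{n}^N)T\subseteq g(f(\mathfrak{m})S)T=(g\circ f)(\mathfrak{m})T$ shows that $(g\circ f)(\mathfrak{m})T$ is $\mathfrak{p}$-primary. For the lower bound in part~\textbf{a}), the locality of $f$ gives $f(\mathfrak{m})\subseteq\mathfrak{n}$ and hence $(g\circ f)(\mathfrak{m})T\subseteq g(\mathfrak{n})T$; this makes $T/g(\mathfrak{n})T$ a quotient of $T/(g\circ f)(\mathfrak{m})T$, yielding $\lambda(g)\leq\lambda(g\circ f)$ immediately.

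For the upper bound, I would feed $M$ and $g$ into part~\textbf{b}) of Proposition~\ref{flatchange}, obtaining
\[
\lambda(g\circ f)=\ell_T(M\otimes_S T)\leq \lambda(g)\cdot\ell_S(M)=\lambda(g)\cdot\lambda(f),
\]
which finishes part~\textbf{a}). The same input combined with part~\textbf{c}) of Proposition~\ref{flatchange} promotes the inequality to an equality when $g$ is flat, giving part~\textbf{b}). I do not foresee a real obstacle: the argument is essentially bookkeeping once the identification $(S/f(\mathfrak{m})S)\otimes_S T \cong T/(g\circ f)(\mathfrak{m})T$ is in hand, and the main thing to keep straight is the interaction of the three maximal ideals $\mathfrak{m},\mathfrak{n},\mathfrak{p}$ and the fact that tensoring with $T$ over $S$ sends the image of $\mathfrak{m}$ through $g\circ f$ rather than merely through $g$.
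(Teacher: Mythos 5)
Your proof is correct: the reduction of $\lambda(g\circ f)$ to $\ell_T\bigl((S/f(\mathfrak{m})S)\otimes_S T\bigr)$ via the isomorphism $(S/f(\mathfrak{m})S)\otimes_S T\cong T/(g\circ f)(\mathfrak{m})T$ is exactly the right move, and parts \textbf{b}) and \textbf{c}) of Proposition~\ref{flatchange} then deliver the upper bound and the flat case, while the surjection $T/(g\circ f)(\mathfrak{m})T\twoheadrightarrow T/g(\mathfrak{n})T$ gives the lower bound. The paper itself states this proposition only as a citation to \cite[Corollary~2.10]{we2} and gives no proof, so there is nothing to compare against here; your argument is the standard one and is complete, including the preliminary check that $g\circ f$ is of finite length.
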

\begin{definition}\label{functor}
Let $R$ be a Noetherian local ring, and let $\varphi$ be a self-map of $R$. Let $\rmod$ be the category of $R$-modules. For every $n\in\mathbb{N}$ we define a functor $\eff^n:\rmod\rightarrow\rmod$ as follows: if $M\in\rmod$, then
\begin{equation}\label{functoriel}
\eff^n(M):=M\otimes_R\Fin R,
\end{equation}
where the \(R\)-module structure of \(\eff^n(M)\) is defined to be 
$$r\cdot x=\sum m_i\otimes r\cdot r_i,\ \ \mathrm{if}\ \ x=\sum m_i\otimes r_i\in\eff^n(M)\ \ \mathrm{and}\ \ r\in R.$$
\end{definition}
\begin{proposition}\label{properties}
Let $R$ be a Noetherian local ring, and let $\varphi$ be a self-map of $R$. The functors $\eff^n$ ($n\in\mathbb{N}$) have the following properties:\smallskip

\begin{compactenum}
\item[\textbf{a})] $\eff^n$ is a right-exact functor.
\item[\textbf{b})] If $R^s$ is a finitely generated free module, then $\eff^n(R^s)\cong R^s$.
\item[\textbf{c})] Let $R^s\stackrel{\alpha}{\rightarrow}R^t$ be a map of finitely generated free $R$-modules. Choose bases $\mathscr{B}_s$ and $\mathscr{B}_t$ for $R^s$ and $R^t$,  and let $(a_{ij})$ be the matrix representation of $\alpha$ in these bases. Then the matrix representation of $\eff^n(\alpha)$ in the bases of $\eff^n(R^s)$ and $\eff^n(R^t)$ obtained from $\mathscr{B}_s$ and $\mathscr{B}_t$ by applying the isomorphisms of \textbf{b}) is $(\varphi^n(a_{ij}))$.
\item[\textbf{d})] If $\mathfrak{a}$ is an ideal of $R$, then $\eff^n(R/\mathfrak{a})\cong R/\varphi^n(\mathfrak{a})R$, as $R$-modules. 
\item[\emph{\textbf{e})}] If $M$ is an $R$-module of finite length, then $\eff^n(M)$ is an $R$-module of finite length, and $\ell_R(\eff^n(M))\leq\ell_R(M)\cdot\lambda(\varphi^n)$.
\end{compactenum}
\end{proposition}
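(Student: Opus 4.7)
The plan is to view $\eff^n$ as the base-change functor along the ring homomorphism $\varphi^n\colon R\to\Fin R$, and to derive all five statements from standard properties of base change, keeping careful track of the two $R$-module structures in play: the one on $\Fin R$ given by $r\cdot s=\varphi^n(r)s$, used to move scalars across the tensor, and the new one prescribed on $\eff^n(M)$ in Definition~\ref{functor}.

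For \textbf{(a)}, I would invoke right-exactness of $-\otimes_R\Fin R$; the prescribed $R$-action on the output is compatible with all tensor maps, so exactness is preserved. For \textbf{(b)}, I would first exhibit an isomorphism $\eff^n(R)\cong R$ via $r\mapsto 1\otimes r$ with inverse $m\otimes s\mapsto\varphi^n(m)s$ (checking $R$-linearity for the new structure by a direct computation), and then use that $\eff^n$ commutes with finite direct sums to conclude $\eff^n(R^s)\cong R^s$.

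For \textbf{(c)}, after fixing the identifications of (b), the key computation is that for $a\in R$ and a basis element $e_i$ of $R^t$, one has $(ae_i)\otimes 1=e_i\otimes\varphi^n(a)=\varphi^n(a)\cdot(e_i\otimes 1)$: the first equality moves the scalar across the tensor using the $\Fin R$-action, and the second uses the definition of the new $R$-action. Applied to $\alpha(e_j)=\sum_i a_{ij}e_i$, this gives $\eff^n(\alpha)(e_j\otimes 1)=\sum_i\varphi^n(a_{ij})\cdot(e_i\otimes 1)$, so the matrix of $\eff^n(\alpha)$ is $(\varphi^n(a_{ij}))$. For \textbf{(d)}, I would choose generators $\mathfrak{a}=(a_1,\ldots,a_k)$, apply right-exactness to the presentation $R^k\to R\to R/\mathfrak{a}\to 0$ with matrix $(a_1,\ldots,a_k)$, and combine (b) and (c) to identify the cokernel with $R/\varphi^n(\mathfrak{a})R$.

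Finally, \textbf{(e)} is a direct appeal to Proposition~\ref{flatchange}\textbf{(b)} applied to $f=\varphi^n\colon R\to\Fin R$, which is a homomorphism of finite length by hypothesis: the $S$-length of $M\otimes_R S$ appearing there coincides with the length of $\eff^n(M)$ under the new $R$-structure, since the two module structures agree when $S=R$ as rings. The main obstacle, to the extent there is one, is purely the bookkeeping in (c): conflating the $\Fin R$-action with the new $R$-action on $\eff^n(M)$ would garble the matrix computation, so both must be handled explicitly rather than absorbed into notation.
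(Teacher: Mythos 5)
Your proposal is correct and follows essentially the same route as the paper: identifying $\eff^n$ on free modules via the explicit tensor maps (you do rank one plus additivity, the paper writes the maps for $R^s$ directly), the same scalar-moving computation $(a e_i)\otimes 1=e_i\otimes\varphi^n(a)=\varphi^n(a)\cdot(e_i\otimes1)$ for the matrix in \textbf{(c)}, a finite presentation plus right-exactness for \textbf{(d)}, and Proposition~\ref{flatchange} for \textbf{(e)}. No gaps worth noting.
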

\begin{proof}
\textbf{a}) The functor $\;\cdot\;\otimes_R\Fin R$ is right-exact, and changing the module structure will not affect the kernels and images. Thus, $\eff^n$ is right-exact.\par
\textbf{b}) Let $\{e_1,\ldots,e_s\}$ be a basis for $R^s$. Consider the maps $\sigma:R^s\rightarrow\eff^n(R^s)$ and $\delta:\eff^n(R^s)\rightarrow R^s$ defined as following:
\begin{eqnarray*}
\sum_{i=1}^sr_ie_i&\stackrel{\sigma}{\mapsto}&\sum_{i=1}^se_i\otimes r_i\\
\sum_j(\sum_{i=1}^sr_{ij}e_i)\otimes a_j&\stackrel{\delta}{\mapsto}&\sum_{i=1}^s(\sum_j\varphi^n(r_{ij})a_j)e_i.
\end{eqnarray*}
One can check that these maps define $R$-module homomorphisms, and $\sigma\circ\delta$ and $\delta\circ\sigma$ are both equal to identity. Thus, $\sigma$ and $\delta$ are isomorphisms.\par
\textbf{c}) Suppose $\mathscr{B}_s=\{e_1,\ldots,e_s\}$ and $\mathscr{B}_t=\{f_1,\ldots,f_t\}$. From the proof of \textbf{b}) we can see that $\{e_1\otimes1,\ldots,e_s\otimes1\}$ and $\{f_1\otimes1,\ldots,f_t\otimes1\}$ are bases of $\eff^n(R^s)$ and $\eff^n(R^t)$, obtained from $\mathscr{B}_s$ and $\mathscr{B}_t$ by applying the isomorphisms of \textbf{b}). We have
\begin{eqnarray*}
\eff^n(\alpha)(e_k\otimes1)&=&\sum_{i=1}^t \alpha(e_k)\otimes1\\
&=&\sum_{i=1}^t a_{ik}f_i\otimes1\\
&=&\sum_{i=1}^t f_i\otimes\varphi^n(a_{ik})\\
&=&\varphi^n(a_{ik})\sum_{i=1}^t f_i\otimes1.
\end{eqnarray*}
The result immediately follows from this equation.\par
\textbf{d}) Consider a minimal presentation of $R/\mathfrak{a}$
\begin{equation}\label{quoseqgen}
R^t\stackrel{\alpha}\rightarrow R\rightarrow R/\mathfrak{a}\rightarrow0.
\end{equation}
Applying the functor $\eff^n$ to this sequence, by \textbf{a}) we obtain an exact sequence
$$\eff^n(R^t)\rightarrow \eff^n(R)\rightarrow \eff^n(R/\mathfrak{a})\rightarrow0.$$ Let $\mathscr{B}_t=\{f_1,\ldots,f_t\}$ be a basis of $R^t$ and consider the basis $\mathscr{B}_1=\{1\}$ of $R$. As mentioned above, $\{e_1\otimes1,\ldots,e_s\otimes1\}$ and $\{1\otimes1\}$ are bases of $\eff^n(R^t)$ and $\eff^n(R)$, obtained from $\mathscr{B}_t$ and $\mathscr{B}_1$ by applying the isomorphisms of \textbf{b}). Let $(a_{1j})$ be the matrix representation of $\alpha$ with respect to $\mathscr{B}_t$ and $\mathscr{B}_1$. Then by \textbf{c}) the matrix representation of $\eff^n(\alpha)$ with respect to the above-mentioned bases of $\eff^n(R^t)$ and $\eff^n(R)$ is $(\varphi^n(a_{1j}))$. From Exact Sequence~\ref{quoseqgen} we see that $a_{11},\ldots,a_{1t}$ are generators of the ideal $\mathfrak{a}$. Hence, $\varphi^n(a_{11}),\ldots,\varphi^n(a_{1t})$ are generators of $\varphi^n(\mathfrak{a})R$. We can quickly see that the following diagram, in which vertical arrows are isomorphisms of \textbf{b}), is commutative
\[\begin{tikzpicture} 
\matrix (m) [matrix of math nodes, row sep=3.2em, column sep=2.6em, text height=2ex, text depth=0.25ex] { \eff^n(R^t) & \eff^n(R) & \eff^n(R/\mathfrak{a}) & 0\\ R^t & R & R/\varphi^n(\mathfrak{a})R & 0\\}; 
\path[->, font=\scriptsize, >=angle 60]
(m-1-1) edge node[auto] {$ (\varphi^n(a_{1j})) $} (m-1-2) 
             edge node[right] {$ \wr $} (m-2-1)
(m-1-2) edge (m-1-3) 
(m-1-3) edge (m-1-4)
(m-2-1) edge node[auto] {$ (\varphi^n(a_{1j})) $} (m-2-2)
(m-2-2) edge (m-2-3) 
(m-2-3) edge (m-2-4)
(m-1-2) edge node[right] {$ \wr $} (m-2-2);
\end{tikzpicture}\]
This diagram induces an $R$-module isomorphism $\eff^n(R/\mathfrak{a})\stackrel{\sim}{\rightarrow} R/\varphi^n(\mathfrak{a})R$.\par
\textbf{e}) These are restatements, in terms of $\eff^n$, of parts \textbf{a}) and \textbf{b}) of Proposition~\ref{flatchange}.
\end{proof}
Below, we list a number of other results, that we will need in the proof of Theorem~\ref{main}.
\begin{lemma}[{\cite[Lemma~3.2]{Herzog}}]\label{Herzoglemma}
Let $(R,\mathfrak{m})$ be a Noetherian local ring, and let $M$ be a finitely generated $R$-module. Consider an ideal $\mathfrak{b}\subseteq\mathfrak{m}$ of $R$. Then there exists an integer $\mu_0\geq0$ such that $\depth(\mathfrak{m},\mathfrak{b}^{\mu}M)>0$ for all $\mu\geq \mu_0$. 
\end{lemma}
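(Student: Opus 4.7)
The plan is to recast the conclusion as a statement about associated primes: for a finitely generated $R$-module $N$, the inequality $\depth(\mathfrak{m},N)>0$ holds if and only if $\mathfrak{m}\notin\Ass_R(N)$, and since $\mathfrak{m}$ is the maximal ideal this is equivalent to the vanishing of the $\mathfrak{m}$-torsion submodule
\[
\Gamma_{\mathfrak{m}}(N):=\{\,x\in N\mid\mathfrak{m}^{r}x=0\text{ for some }r\geq0\,\}.
\]
(The degenerate case $\mathfrak{b}^{\mu}M=0$ is settled by the convention $\depth(\mathfrak{m},0)=+\infty$.) I therefore need to produce an integer $\mu_{0}$ such that $\Gamma_{\mathfrak{m}}(\mathfrak{b}^{\mu}M)=0$ for all $\mu\geq\mu_{0}$.

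Next I would record two simple observations. First, since $\mathfrak{b}^{\mu}M\subseteq M$, an element of $\mathfrak{b}^{\mu}M$ is killed by a power of $\mathfrak{m}$ exactly when it is so killed inside $M$, giving $\Gamma_{\mathfrak{m}}(\mathfrak{b}^{\mu}M)=\mathfrak{b}^{\mu}M\cap\Gamma_{\mathfrak{m}}(M)$. Second, $\Gamma_{\mathfrak{m}}(M)$ is a finitely generated submodule of $M$, and because each of its generators is annihilated by some power of $\mathfrak{m}$ there is a single integer $N\geq0$ with $\mathfrak{m}^{N}\Gamma_{\mathfrak{m}}(M)=0$. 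Since $\mathfrak{b}\subseteq\mathfrak{m}$, this forces $\mathfrak{b}^{N}\Gamma_{\mathfrak{m}}(M)\subseteq\mathfrak{m}^{N}\Gamma_{\mathfrak{m}}(M)=0$ as well.

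The endgame is the Artin--Rees Lemma applied to the submodule $\Gamma_{\mathfrak{m}}(M)\subseteq M$ with respect to the ideal $\mathfrak{b}$: there exists $k\geq0$ such that
\[
\mathfrak{b}^{\mu}M\cap\Gamma_{\mathfrak{m}}(M)=\mathfrak{b}^{\mu-k}\bigl(\mathfrak{b}^{k}M\cap\Gamma_{\mathfrak{m}}(M)\bigr)
\]
for every $\mu\geq k$. The right-hand side sits inside $\mathfrak{b}^{\mu-k}\Gamma_{\mathfrak{m}}(M)$, which vanishes as soon as $\mu-k\geq N$. Setting $\mu_{0}:=k+N$ then gives $\Gamma_{\mathfrak{m}}(\mathfrak{b}^{\mu}M)=0$, hence $\depth(\mathfrak{m},\mathfrak{b}^{\mu}M)>0$, for every $\mu\geq\mu_{0}$.

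The only real subtlety is the observation that some power of $\mathfrak{b}$ already annihilates $\Gamma_{\mathfrak{m}}(M)$; this is what turns the Artin--Rees decomposition into an honest vanishing statement despite the fact that $\mathfrak{b}$ may be strictly smaller than $\mathfrak{m}$. Once that point is in hand, the rest is formal.
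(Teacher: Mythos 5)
Your argument is correct: reducing positive $\mathfrak{m}$-depth to the vanishing of $\Gamma_{\mathfrak{m}}(\mathfrak{b}^{\mu}M)=\mathfrak{b}^{\mu}M\cap\Gamma_{\mathfrak{m}}(M)$, noting that $\Gamma_{\mathfrak{m}}(M)$ is finitely generated and killed by a power of $\mathfrak{m}$ (hence of $\mathfrak{b}$), and then invoking Artin--Rees gives exactly the required $\mu_0$, and your treatment of the case $\mathfrak{b}^{\mu}M=0$ matches the paper's stated convention that the zero module has infinite depth. The paper itself gives no proof of this lemma, citing Herzog's Lemma~3.2 instead, and your argument is essentially the classical one from that source (Artin--Rees applied to the $\mathfrak{m}$-torsion submodule $H^0_{\mathfrak{m}}(M)$), so nothing further is needed.
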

\begin{remark}
Lemma~\ref{Herzoglemma} must be used together with the standard convention, that the depth of the zero module is $\infty$ (see, for example,~\cite[p.~291]{Huneke}). For instance, if $M$ is an $R$-module of finite length, then for $\mu\gg0$ we have $\mathfrak{m}^\mu M=(0)$.
\end{remark}
\begin{proposition}[{\cite[Chap.~10, \S~1, Proposition~1]{Bourb2}}]\label{depthl}
Let $R$ be a ring , $\mathfrak{a}$ an ideal of $R$, and let $0\rightarrow M^\prime\rightarrow M\rightarrow M^{\prime\prime}\rightarrow0$ be an exact sequence of $R$-modules. Define
$d^\prime=\depth(\mathfrak{a},M^\prime)$, $d=\depth(\mathfrak{a},M)$, and $d^{\prime\prime}=\depth(\mathfrak{a},M^{\prime\prime})$. Then we will have one of the following mutually exclusive possibilities:
$$d^\prime=d\leq d^{\prime\prime},\ \ \ d=d^{\prime\prime}<d^\prime,\ \ \ d^{\prime\prime}=d^\prime-1<d.$$
\end{proposition}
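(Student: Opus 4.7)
The natural approach is to replace depth by its cohomological avatar. Recall that for a finitely generated module $N$ and the ideal $\mathfrak{a}$, one has $\depth(\mathfrak{a},N)=\inf\{i\ge 0 : \mathrm{Ext}^i_R(R/\mathfrak{a},N)\ne 0\}$, with $\inf\emptyset=\infty$, so the convention $\depth(\mathfrak{a},0)=\infty$ is built in. Writing $E^i(N):=\mathrm{Ext}^i_R(R/\mathfrak{a},N)$ for brevity, the given short exact sequence yields the long exact sequence
\[
\cdots \to E^{i-1}(M'')\stackrel{\delta^{i-1}}{\longrightarrow} E^i(M')\to E^i(M)\to E^i(M'')\stackrel{\delta^{i}}{\longrightarrow} E^{i+1}(M')\to\cdots,
\]
and the whole proof reduces to locating the first index $i$ at which $E^i(M)$ is nonzero, as a function of $d'$ and $d''$.

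For $i<\min(d',d'')$ the flanking terms $E^i(M')$ and $E^i(M'')$ both vanish, forcing $E^i(M)=0$; hence $d\ge\min(d',d'')$ in every case. If $d'\le d''$, then at $i=d'$ one has $E^{d'-1}(M'')=0$ (since $d'-1<d'\le d''$), so the map $E^{d'}(M')\to E^{d'}(M)$ is injective; because $E^{d'}(M')\ne 0$ this forces $d=d'$, yielding the first alternative $d'=d\le d''$. If instead $d''<d'$, then at $i=d''$ one has $E^{d''}(M')=0$, so $E^{d''}(M)\cong\ker(\delta^{d''})\subseteq E^{d''}(M'')$. When $d''+1<d'$ the target $E^{d''+1}(M')$ vanishes, $\delta^{d''}=0$, and $E^{d''}(M)\cong E^{d''}(M'')\ne 0$, so $d=d''$; this is the second alternative. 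When $d''+1=d'$, the connecting map $\delta^{d''}$ may or may not be injective: if not, $\ker(\delta^{d''})\ne 0$ and again $d=d''$; if so, $E^{d''}(M)=0$, whence $d>d''=d'-1$, which is exactly the third alternative.

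Mutual exclusivity is automatic: alternative~(i) is characterized by $d'\le d''$, while (ii) and (iii) both require $d''<d'$ and are separated by whether $d=d''$ or $d>d''$. The step I expect to be the main obstacle --- really the only point where the argument has content beyond bookkeeping in the long exact sequence --- is recognizing that in the boundary situation $d''+1=d'$ the entire trichotomy is controlled by the injectivity of the single connecting map $\delta^{d''}\colon E^{d''}(M'')\to E^{d'}(M')$, and that the value of $d$ in alternative~(iii) cannot be pinned down more precisely than $d>d''$ without further input about this map.
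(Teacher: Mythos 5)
The paper offers no proof of this proposition---it is quoted verbatim from the cited Bourbaki reference---so there is no internal argument to compare against; your proof is the standard one and appears complete and correct: locating $d$ via the long exact sequence of $\mathrm{Ext}^{\bullet}_R(R/\mathfrak{a},-)$, with the boundary case $d''+1=d'$ governed by the injectivity of the connecting map, is exactly how the trichotomy is established in the source. The one caveat is that your opening step invokes the Rees characterization $\depth(\mathfrak{a},N)=\inf\{i\ge 0:\mathrm{Ext}^i_R(R/\mathfrak{a},N)\ne 0\}$, which requires $R$ Noetherian and $N$ finitely generated---hypotheses not present in the statement as quoted (Bourbaki essentially takes this description as the definition of depth, which is why he can state it for arbitrary rings and modules), though this is harmless here since the paper only ever applies the proposition to finitely generated modules over Noetherian local rings.
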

The next lemma is Nagata's Flatness Criterion. A proof can be found in~\cite[Chap.~II, Theorem~19.1, p.~64]{Nagata2}. See also~\cite[Ex.~22.1, p.~178]{Matsumura2}.
\begin{lemma}[Nagata]\label{Nagata}
Let $(R,\mathfrak{m})$ and $(S,\mathfrak{n})$ be Noetherian local rings. Suppose $R\subset S$, and assume that $\mathfrak{m}S$ is an $\mathfrak{n}$-primary ideal. Then $S$ is flat over $R$, if and only if for every $\mathfrak{m}$-primary ideal $\mathfrak{q}$ of $R$,
\begin{equation}\label{transition}
\ell_R(R/\mathfrak{q})\cdot\ell_S(S/\mathfrak{m}S)=\ell_S(S/\mathfrak{q}S).
\end{equation}
\end{lemma}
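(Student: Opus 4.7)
The forward direction is immediate from Proposition~\ref{flatchange}(c): applied to the inclusion $f\colon R\hookrightarrow S$ and to the finite-length $R$-module $M=R/\mathfrak{q}$, it gives $\ell_S(S/\mathfrak{q}S)=\ell_S(M\otimes_R S)=\lambda(f)\cdot\ell_R(R/\mathfrak{q})=\ell_S(S/\mathfrak{m}S)\cdot\ell_R(R/\mathfrak{q})$, which is Equation~\ref{transition}.

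For the reverse direction the plan is to squeeze more information out of Equation~\ref{transition} than its literal content. Given any two $\mathfrak{m}$-primary ideals $\mathfrak{q}'\subset\mathfrak{q}$, apply the hypothesis to both and tensor the short exact sequence $0\to\mathfrak{q}/\mathfrak{q}'\to R/\mathfrak{q}'\to R/\mathfrak{q}\to 0$ with $S$. A length count in the resulting right-exact sequence, combined with the general upper bound $\ell_S((\mathfrak{q}/\mathfrak{q}')\otimes_R S)\leq\ell_R(\mathfrak{q}/\mathfrak{q}')\cdot\ell_S(S/\mathfrak{m}S)$ supplied by Proposition~\ref{flatchange}(b), forces the natural surjection $(\mathfrak{q}/\mathfrak{q}')\otimes_R S\twoheadrightarrow\mathfrak{q}S/\mathfrak{q}'S$ to be an isomorphism and forces this upper bound to be an equality.

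Specializing to $\mathfrak{q}=\mathfrak{m}^n$ and $\mathfrak{q}'=\mathfrak{m}^{n+1}$ and feeding the result into the long exact $\tor$-sequence shows that the connecting map $\tor_1^R(R/\mathfrak{m}^n,S)\to\mathfrak{m}^n/\mathfrak{m}^{n+1}\otimes_R S$ vanishes for every $n$; equivalently, the natural graded map $\mathrm{gr}_\mathfrak{m}(R)\otimes_{R/\mathfrak{m}}(S/\mathfrak{m}S)\to\mathrm{gr}_{\mathfrak{m}S}(S)$ is an isomorphism. I then invoke the standard local criterion of flatness (for instance \cite[Theorem~22.3]{Matsumura2}): because $\mathfrak{m}S$ is $\mathfrak{n}$-primary the $\mathfrak{m}$-adic topology on $S$ agrees with the $\mathfrak{n}$-adic one, so $S$ is $\mathfrak{m}$-adically ideally separated (by Krull's intersection theorem inside the Noetherian local ring $S$), and $S/\mathfrak{m}S$ is automatically flat over the field $R/\mathfrak{m}$. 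The criterion then promotes the graded isomorphism to flatness of $S$ over $R$.

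The main obstacle is this final step: converting the collection of vanishing connecting maps into honest vanishing of $\tor_1^R(R/\mathfrak{m},S)$ and thence into flatness. This passage is not a pure length computation; it depends on the non-trivial local flatness criterion, whose proof combines the Artin-Rees lemma with a Mittag-Leffler / inverse-limit argument.
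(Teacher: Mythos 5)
Your argument is correct. The paper does not actually prove Lemma~\ref{Nagata}; it only points to \cite[Chap.~II, Theorem~19.1]{Nagata2} and \cite[Ex.~22.1]{Matsumura2}, so there is no internal proof to compare against, and what you have written is essentially the standard solution of that exercise, nicely rephrased so that the length bookkeeping reuses Proposition~\ref{flatchange}. The forward direction is indeed immediate from Proposition~\ref{flatchange}\textbf{c}). In the converse, your key step checks out: for $\mathfrak{m}$-primary ideals $\mathfrak{q}'\subset\mathfrak{q}$ the image of $(\mathfrak{q}/\mathfrak{q}')\otimes_R S\to S/\mathfrak{q}'S$ is $\mathfrak{q}S/\mathfrak{q}'S$, whose length equals $\ell_R(\mathfrak{q}/\mathfrak{q}')\cdot\ell_S(S/\mathfrak{m}S)$ by applying Equation~\ref{transition} to both $\mathfrak{q}$ and $\mathfrak{q}'$; since this already saturates the upper bound of Proposition~\ref{flatchange}\textbf{b}) for the source, the surjection onto the image is an isomorphism. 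Taking $\mathfrak{q}=\mathfrak{m}^n$ and $\mathfrak{q}'=\mathfrak{m}^{n+1}$ (each $\mathfrak{m}^n$ is $\mathfrak{m}$-primary) yields precisely the graded-isomorphism condition of the local flatness criterion, and your verification of its hypotheses is sound: $S/\mathfrak{m}S$ is flat over the field $R/\mathfrak{m}$, and for any ideal $\mathfrak{a}\subseteq R$ the module $\mathfrak{a}\otimes_R S$ is a finitely generated $S$-module with $\mathfrak{m}(\mathfrak{a}\otimes_R S)\subseteq\mathfrak{n}(\mathfrak{a}\otimes_R S)$, hence $\mathfrak{m}$-adically separated by Krull intersection. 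You are also right to flag that the passage from the graded isomorphisms to flatness is the one genuinely non-elementary ingredient; it cannot be replaced by further length counting, which is presumably why the authors cite the result rather than prove it. The only trade-off between the two treatments is that the paper's citation keeps the exposition short, while your version makes the lemma self-contained up to a single standard theorem and makes visible exactly where the hypothesis ``for every $\mathfrak{m}$-primary $\mathfrak{q}$'' is consumed (namely, on the cofinal family $\{\mathfrak{m}^n\}$).
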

\section{Kunz Regularity Criterion}
In this section we will present the proof of our main result, Theorem~\ref{main}. We first need to establish a lemma, that we will need in our proof of this Theorem.
\begin{lemma}\label{forinj}
Let $\varphi$ be a self-map of finite length of a Noetherian local ring $(R,\mathfrak{m})$, and let $\mathfrak{a}$ be a $\varphi$-invariant ideal of $R$, i.e., $\varphi(\mathfrak{a})R\subset\mathfrak{a}$. Let $\overline{\varphi}$ be the self-map of $R/\mathfrak{a}$ induced by $\varphi$. Let $q(\varphi)$ be as defined in Theorem~\ref{main}.\smallskip 

\begin{compactenum}
\item[\emph{\textbf{i})}] If $\lambda(\varphi^n)=q(\varphi)^n$ for some $n\in\mathbb{N}$, then $\lambda(\varphi^{nt})=q(\varphi)^{nt}$ for all $t\in\mathbb{N}$.
\item[\emph{\textbf{ii})}] If in addition to the assumption in \emph{\textbf{i})}, we have $h_{\mathrm{alg}}(\overline{\varphi},R/\mathfrak{a})=h_{\mathrm{alg}}(\varphi,R)$ and if $\varphi$ is contracting, then $\mathfrak{a}=(0)$.
\end{compactenum}
\end{lemma}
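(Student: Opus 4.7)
For (i), I would sandwich $\lambda(\varphi^{nt})$ between two matching bounds. By Remark~\ref{infimum} the sequence $\log\lambda(\varphi^n)/n$ converges to its infimum $\log q(\varphi)$, so $\lambda(\varphi^m)\geq q(\varphi)^m$ for every $m\in\mathbb{N}$. On the other hand, iterating the submultiplicativity $\lambda(g\circ f)\leq\lambda(g)\lambda(f)$ of Proposition~\ref{magic}(a) on the decomposition $\varphi^{nt}=\varphi^n\circ\cdots\circ\varphi^n$ gives $\lambda(\varphi^{nt})\leq\lambda(\varphi^n)^t=q(\varphi)^{nt}$, and the two inequalities combine to the desired equality $\lambda(\varphi^{nt})=q(\varphi)^{nt}$.

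For (ii), the central step is a length comparison between $\varphi$ and $\overline\varphi$ obtained from the $\varphi$-invariance of $\mathfrak{a}$. Unwinding the definition gives
\begin{equation*}
\lambda(\overline\varphi^n)=\ell_R\bigl(R/(\varphi^n(\mathfrak{m})R+\mathfrak{a})\bigr),
\end{equation*}
and the short exact sequence
\begin{equation*}
0\to\mathfrak{a}/(\mathfrak{a}\cap\varphi^n(\mathfrak{m})R)\to R/\varphi^n(\mathfrak{m})R\to R/(\varphi^n(\mathfrak{m})R+\mathfrak{a})\to 0
\end{equation*}
yields $\lambda(\varphi^n)=\lambda(\overline\varphi^n)+\ell_R\bigl(\mathfrak{a}/(\mathfrak{a}\cap\varphi^n(\mathfrak{m})R)\bigr)$. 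Substituting $nt$ for $n$, part (i) tells us that the left side equals $q(\varphi)^{nt}$, while the infimum property of Remark~\ref{infimum} applied to $\overline\varphi$, together with the hypothesis $q(\overline\varphi)=q(\varphi)$, forces $\lambda(\overline\varphi^{nt})\geq q(\varphi)^{nt}$. The correction term must therefore vanish, giving $\mathfrak{a}\subseteq\varphi^{nt}(\mathfrak{m})R$ for every $t\geq 1$.

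To close, I would invoke the contracting hypothesis via Remark~\ref{alternative}: from $\varphi^{\edim(R)}(\mathfrak{m})R\subseteq\mathfrak{m}^2$ a straightforward induction gives $\varphi^{k\edim(R)}(\mathfrak{m})R\subseteq\mathfrak{m}^{2^k}$, and locality of $\varphi$ yields $\varphi^a(\mathfrak{m})R\subseteq\varphi^b(\mathfrak{m})R$ whenever $a\geq b$. Consequently
\begin{equation*}
\mathfrak{a}\subseteq\bigcap_{t\geq 1}\varphi^{nt}(\mathfrak{m})R\subseteq\bigcap_{k\geq 1}\mathfrak{m}^k=(0)
\end{equation*}
by Krull's intersection theorem, so $\mathfrak{a}=(0)$. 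The only delicate point is the bookkeeping in (ii): one has to combine the equality from (i), the infimum inequality for $\overline\varphi$, and the additive length identity in precisely the right order so that the correction term gets squeezed down to zero.
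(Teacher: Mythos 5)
Your proof is correct and follows essentially the same route as the paper: part (i) is the identical sandwich between the infimum bound of Remark~\ref{infimum} and the submultiplicativity of Proposition~\ref{magic}, and part (ii) is the same squeeze $q(\varphi)^{nt}\leq\lambda(\overline\varphi^{nt})\leq\lambda(\varphi^{nt})=q(\varphi)^{nt}$ forcing $\mathfrak{a}\subseteq\varphi^{nt}(\mathfrak{m})R$ for all $t$, followed by the contracting hypothesis and Krull intersection. The only cosmetic differences are that you phrase the length comparison via the short exact sequence rather than the surjection of quotients, and you spell out the reduction $\varphi^{k\edim(R)}(\mathfrak{m})R\subseteq\mathfrak{m}^{2^k}$ that the paper delegates to Remark~\ref{alternative}.
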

\begin{proof}
\textbf{i}): Let $t\in\mathbb{N}$. As noted in Remark~\ref{infimum}, the sequence $\{\log\lambda(\varphi^{nt})/(nt)\}$ converges to its infimum as $t\rightarrow\infty$. Hence, $$h_{\mathrm{alg}}(\varphi,R)\leq\log\lambda(\varphi^{nt})/(nt).$$ From this inequality we quickly obtain $q(\varphi)^{nt}\leq\lambda(\varphi^{nt})$. On the other hand, by Proposition~\ref{magic} $\lambda(\varphi^{nt})\leq\lambda(\varphi^n)^t$. Using assumption \textbf{i}) and the previous inequalities we obtain
\begin{eqnarray*}
q(\varphi)^{nt}&\leq&\lambda(\varphi^{nt})\nonumber\\
&\leq&\lambda(\varphi^n)^t\nonumber\\
&=&q(\varphi)^{nt}\nonumber.
\end{eqnarray*}
Hence, $\lambda(\varphi^{nt})=q(\varphi)^{nt}$ for all $t\in\mathbb{N}$.\par 
\textbf{ii}): Similarly, 
\begin{eqnarray}\label{second}
q(\overline{\varphi})^{nt}&\leq&\lambda(\overline{\varphi}^{nt})\nonumber\\
&\leq&\lambda(\varphi^{nt})\\
&=&q(\varphi)^{nt}\nonumber.
\end{eqnarray}
From assumption \textbf{ii}) it follows that $q(\overline{\varphi})=q(\varphi)$. Then from Equation~\ref{second} we conclude $\lambda(\overline{\varphi}^{nt})=\lambda(\varphi^{nt})$ for all $t\in\mathbb{N}$. Since $\lambda(\overline{\varphi}^{nt})=\ell_R(R/[\varphi^{nt}(\mathfrak{m})R+\mathfrak{a}])$ (see~\cite[Proposition~2.8]{we2}), we obtain
\begin{equation}\label{lengths}
\ell_R(R/[\varphi^{nt}(\mathfrak{m})R+\mathfrak{a}])=\ell_R(R/\varphi^{nt}(\mathfrak{m})R),\ \ \ \forall t\in\mathbb{N}.
\end{equation}
The surjection 
$$R/\varphi^{nt}(\mathfrak{m})R\rightarrow R/[\varphi^{nt}(\mathfrak{m})R+\mathfrak{a}])\rightarrow0,$$
and Equation~\ref{lengths} then show 
$$R/[\varphi^{nt}(\mathfrak{m})R+\mathfrak{a}]=R/\varphi^{nt}(\mathfrak{m})R,\ \ \ \forall t\in\mathbb{N}.$$
Hence,
$$\mathfrak{a}\subset\bigcap_{t\in\mathbb{N}}\varphi^{nt}(\mathfrak{m})R=(0),$$
where the last equality follows from Remark~\ref{alternative}, because $\varphi$ is, by assumption, contracting.
\end{proof}
Now we are ready to present our proof of Theorem~\ref{main}:
\begin{proof}
\textbf{a}) $\Rightarrow$ \textbf{b}): To say that $\varphi$ is of finite length means $\dim R/\varphi(\mathfrak{m})R=0$. Hence, the following equation holds:
$$\dim R = \dim R + \dim R/\varphi(\mathfrak{m})R.$$ 
Since $R$ is regular, the result easily follows from, for example,~\cite[Theorem~23.1]{Matsumura2}.\par
\textbf{b}) $\Rightarrow$ \textbf{c}): This follows from Proposition~\ref{magic}. Since $\varphi$ is assumed to be flat, by that proposition for all $n\in\mathbb{N}$ we obtain
$$\lambda(\varphi^n)=\lambda(\varphi)^n.$$
Thus, by definition of algebraic entropy
\begin{eqnarray*}
h_{\mathrm{alg}}(\varphi,R)&=&\lim_{n\rightarrow\infty}(1/n)\cdot\log\lambda(\varphi^n)\nonumber\\
&=&\lim_{n\rightarrow\infty}(1/n)\cdot\log\lambda(\varphi)^n\nonumber\\
&=&\log\lambda(\varphi).\nonumber
\end{eqnarray*}
This means $\lambda(\varphi)=q(\varphi)$.\par
\textbf{c}) $\Rightarrow$ \textbf{d}): This is clear.\par
\textbf{b}) $\Rightarrow$ \textbf{a}): (This is essentially Herzog's proof in~\cite[Satz~3.1]{Herzog}. We will re-write it for an arbitrary self-map here. See also~\cite[Lemma~3]{BrunGubel}.) To show that $R$ is regular, it suffices to show all finitely generated $R$-modules have finite projective dimension. So let $M$ be a finitely generated $R$-module. Suppose $M$ were of infinite projective dimension. Then consider a minimal (infinite) free resolution of $M$
$$L_{\bullet}\rightarrow M\rightarrow0.$$
Let $s:=\depth(\mathfrak{m},R)$, and take an $R$-regular sequence of elements $\{x_1,\ldots,x_s\}$ in $\mathfrak{m}$. Write $\mathfrak{a}$ for the ideal generated by this regular sequence. (If $s=0$, take $\mathfrak{a}=(0)$.) For every $n\in\mathbb{N}$ we set
$$C^n_{\bullet}:=\eff^n(L_{\bullet})\otimes_RR/\mathfrak{a}\ \ \ \ \mathrm{and}\ \ \ \ B^n_i:=\im(C^n_{i+1}\rightarrow C^n_i).$$
Using Proposition~\ref{properties}-\textbf{b}, we quickly see that $C^n_i\cong L_i/\mathfrak{a}L_i$. This shows that $C^n_i$ is independent of $n$, and that $C^n_i$ is a nonzero finitely generated module of depth zero for all $i$. Using Proposition~\ref{properties}-\textbf{c}, we can see that $B^n_i\subseteq\varphi^n(\mathfrak{m})C_i^n$. Applying Lemma~\ref{Herzoglemma}, let $\mu_{i_0}$ be such that $\depth(\mathfrak{m},\mathfrak{m}^{\mu_{i_0}} C^n_i)>0$. Since $\varphi$ is contracting by assumption, from Remark~\ref{alternative} it easily follows that if $n$ is large enough, then $\varphi^{n}(\mathfrak{m})R\subseteq\mathfrak{m}^{\mu_{i_0}}$ and in that case, $B^n_i\subseteq\varphi^n(\mathfrak{m})C_i^n\subseteq \mathfrak{m}^{\mu_{i_0}}C_i^n$. This shows that $\depth(\mathfrak{m},B_i^n)>0$ for large $n$.\par On the other hand, since $\varphi$ is flat by assumption, $\eff^n(L_{\bullet})$ is exact. Thus, by parts \textbf{a}), \textbf{b}), and \textbf{c}) of Proposition~\ref{properties} $$\eff^n(L_{\bullet})\rightarrow\eff^n(M)\rightarrow0$$ is a minial (infinite) free resolution of $\eff^n(M)$. Hence $$\eh_i(C^n_{\bullet})=\tor_i^R(\eff^n(M),R/\mathfrak{a})=0,\ \ \mathrm{for}\ \ i>s.$$
This shows that if $i>s$, then the sequences
\begin{equation}\label{conseq}
0\rightarrow B^n_{i+1}\rightarrow C^n_{i+1}\rightarrow B^n_i\rightarrow0
\end{equation} 
are exact for all $n\in\mathbb{N}$. Take $i=s+1$ in Sequence~\ref{conseq}, for instance. By the above argument, if we take $n$ large enough, we will obtain $\depth(\mathfrak{m},B_{s+1}^n)>0$ and $\depth(\mathfrak{m},B_{s+2}^n)>0$, while $\depth(\mathfrak{m},C^n_{s+2})=0$. This contradicts Proposition~\ref{depthl}. This contradiction shows that the projective dimension of $M$ must be finite.\par
\textbf{d}) $\Rightarrow$ \textbf{b}): We will use Nagata's Flatness Criterion (Lemma~\ref{Nagata}) to show that $\varphi^n$ is flat. To apply this criterion, we first need to show that $\varphi$ is injective.\par Clearly, $\ker\varphi$ is $\varphi$-invariant. Let \(\overline{\varphi}\) be the local self-map induced by \(\varphi\) on \(R/\ker\varphi\). By~\cite[Proposition~5.14]{we2} \(h_{\mathrm{alg}}(\varphi,R)=h_{\mathrm{alg}}(\overline{\varphi},R/\ker\varphi)\), and by assumption $\lambda(\varphi^n)=q(\varphi)^n$ for some $n\in\mathbb{N}$. From Lemma~\ref{forinj} it follows that $\ker\varphi=(0)$.\par
Now let $\mathfrak{q}$ be an $\mathfrak{m}$-primary ideal of $R$. Since $\varphi$ is contracting, by Remark~\ref{alternative} we can choose $t$ large enough so that $\varphi^{nt}(\mathfrak{m})R\subset\mathfrak{q}$. Then there is an exact sequence
\begin{equation}\label{sequence1}
0\rightarrow\frac{\mathfrak{q}}{\varphi^{nt}(\mathfrak{m})R}\rightarrow\frac{R}{\varphi^{nt}(\mathfrak{m})R}\rightarrow R/\mathfrak{q}\rightarrow0.
\end{equation}
From this sequence we obtain
\begin{equation}\label{lengths1}
\ell_R\big(\frac{R}{\varphi^{nt}(\mathfrak{m})R}\big)=\ell_R\big(\frac{\mathfrak{q}}{\varphi^{nt}(\mathfrak{m})R}\big)+\ell_R(R/\mathfrak{q}).
\end{equation}
Now we apply the functor $\eff^n$ to the Sequence~\ref{sequence1} and obtain the exact sequence
$$\eff^n\big(\frac{\mathfrak{q}}{\varphi^{nt}(\mathfrak{m})R}\big)\rightarrow\eff^n\big(\frac{R}{\varphi^{nt}(\mathfrak{m})R}\big)\rightarrow\eff^n\big(R/\mathfrak{q}\big)\rightarrow0.$$
By Proposition~\ref{properties}-\textbf{e} all terms of this sequence are $R$-modules of finite length. From this sequence we obtain
\begin{equation}\label{sequence2}
\ell_R\big(\eff^n(\frac{R}{\varphi^{nt}(\mathfrak{m})R})\big)\leq\ell_R\big(\eff^n(\frac{\mathfrak{q}}{\varphi^{nt}(\mathfrak{m})R})\big)+\ell_R\big(\eff^n(R/\mathfrak{q})\big).
\end{equation}
By Lemma~\ref{forinj}, from the assumption $\lambda(\varphi^n)=q(\varphi)^n$ we conclude $\lambda(\varphi^{nt})=q(\varphi)^{nt}$ for all $t\in\mathbb{N}$. Using this equation and by  Proposition~\ref{properties}-\textbf{d}, we immediately obtain
\begin{eqnarray*} 
\ell_R\big(\eff^n(\frac{R}{\varphi^{nt}(\mathfrak{m})R})\big)&=&\ell_R\big(\frac{R}{\varphi^{nt+n}(\mathfrak{m})R}\big)\\
&=&\lambda(\varphi^{n(t+1)})\nonumber\\
&=&q(\varphi)^{n(t+1)}\nonumber\\
&=&q(\varphi)^{nt}\cdot q(\varphi)^n\nonumber\\
&=&\lambda(\varphi^{nt})\cdot\lambda(\varphi^n).\nonumber
\end{eqnarray*}
Thus, we can re-write Inequality~\ref{sequence2} as
\begin{equation}\label{curious}
\lambda(\varphi^{nt})\cdot\lambda(\varphi^n)\leq\ell_R\big(\eff^n(\frac{\mathfrak{q}}{\varphi^{nt}(\mathfrak{m})R})\big)+\ell_R\big(\eff^n(R/\mathfrak{q})\big).
\end{equation}
On the other hand, by Proposition~\ref{properties}-\textbf{e}
\begin{eqnarray}\label{nostrict}
\ell_R\big(\eff^n(\frac{\mathfrak{q}}{\varphi^{nt}(\mathfrak{m})R})\big)&\leq&\ell_R\big(\frac{\mathfrak{q}}{\varphi^{nt}(\mathfrak{m})R}\big)\cdot\lambda(\varphi^n).\nonumber\\
\ell_R\big(\eff^n(R/\mathfrak{q})\big)&\leq&\ell_R(R/\mathfrak{q})\cdot\lambda(\varphi^n). 
\end{eqnarray}
If Inequality~\ref{nostrict} were strict, then from Inequality~\ref{curious} and above inequalities, we would obtain a strict inequality
$$\lambda(\varphi^{nt})\cdot\lambda(\varphi^n)<\ell_R\big(\frac{\mathfrak{q}}{\varphi^{nt}(\mathfrak{m})R}\big)\cdot\lambda(\varphi^n)+\ell_R(R/\mathfrak{q})\cdot\lambda(\varphi^n),$$
and after dividing by $\lambda(\varphi^n)$:
$$\lambda(\varphi^{nt})<\ell_R\big(\frac{\mathfrak{q}}{\varphi^{nt}(\mathfrak{m})R}\big)+\ell_R(R/\mathfrak{q}).$$
But this inequality would be in contrast to Equation~\ref{lengths1} (note that by Definition~\ref{deflambda} $\lambda(\varphi^{nt})=\ell_R(R/\varphi^{nt}(\mathfrak{m})R)$). This contradiction shows that Inequality~\ref{nostrict} must in fact be an equality, that is, we must have
$$\ell_R\big(\eff^n(R/\mathfrak{q})\big)=\ell_R(R/\mathfrak{q})\cdot\lambda(\varphi^n).$$
By Proposition~\ref{properties}-\textbf{e} and Definition~\ref{deflambda} this means
\begin{equation}\label{fantas}
\ell_R\big(R/\varphi^n(\mathfrak{q})R\big)=\ell_R(R/\mathfrak{q})\cdot\ell_R\big(R/\varphi^n(\mathfrak{m})R\big).
\end{equation}
Since $\mathfrak{q}$ was an arbitrary $\mathfrak{m}$-primary ideal of $R$, by Nagata's Flatness Criterion (Lemma~\ref{Nagata}) Equation~\ref{fantas} tells us that $\varphi^n$ is flat. The implication \textbf{b} $\Rightarrow$ \textbf{a}) of Theorem~\ref{main} applied to $\varphi^n$ then tells us that $R$ is regular, and  the implication \textbf{a} $\Rightarrow$ \textbf{b}) of the same theorem shows that $\varphi$ is flat, as well.
\end{proof}
In~\cite[Definition~3.7]{we2} we proposed the following definition:
\begin{definition}[Hilbert-Kunz multiplicity]\label{HKM}
Let $R$ be a Noetherian local ring of arbitrary characteristic, and let $\varphi$ be a self-map of finite length of $R$. Let $$q(\varphi):=\exp(h_{\mathrm{alg}}(\varphi,R)).$$ Then the Hilbert-Kunz multiplicity of \(R\) with respect to \(\varphi\) is defined as
$$
e_{\mathrm{HK}}(\varphi,R):=\lim_{n\rightarrow\infty}\frac{\lambda(\varphi^n)}{q(\varphi)^n},
$$
provided that the limit exists.
\end{definition}
The following corollary quickly follows from Theorem~\ref{main}.
\begin{corollary}
Let $\varphi$ be a self-map of finite length of a regular local ring $R$. Then $e_{\mathrm{HK}}(\varphi,R)=1$.
\end{corollary}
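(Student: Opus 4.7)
The plan is to observe that the corollary is essentially immediate once we unpack Theorem~\ref{main} in the regular case. Since $R$ is regular, the implication \textbf{a}) $\Rightarrow$ \textbf{b}) of Theorem~\ref{main} tells us that $\varphi:R\to R$ is flat. Then Proposition~\ref{magic}-\textbf{b}), applied inductively with $f=\varphi^{n-1}$ and $g=\varphi$ (both flat, since compositions of flat maps are flat), yields the multiplicativity
\[
\lambda(\varphi^n)=\lambda(\varphi)^n\quad\text{for all }n\in\mathbb{N}.
\]

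Next, I would invoke the implication \textbf{b}) $\Rightarrow$ \textbf{c}) of Theorem~\ref{main} (or equivalently redo the short computation from its proof), which gives $\lambda(\varphi)=q(\varphi)$. Combining with the displayed multiplicativity,
\[
\lambda(\varphi^n)=q(\varphi)^n\quad\text{for every }n\in\mathbb{N}.
\]

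Therefore the sequence $\lambda(\varphi^n)/q(\varphi)^n$ in Definition~\ref{HKM} is identically equal to $1$, so the limit trivially exists and equals $1$, i.e.\ $e_{\mathrm{HK}}(\varphi,R)=1$. There is no real obstacle: the content is entirely packaged into the implications \textbf{a}) $\Rightarrow$ \textbf{b}) $\Rightarrow$ \textbf{c}) of Theorem~\ref{main} together with the multiplicativity of $\lambda$ along flat maps from Proposition~\ref{magic}.
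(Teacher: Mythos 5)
Your argument is correct and is exactly the route the paper intends when it says the corollary ``quickly follows'' from Theorem~\ref{main}: regularity gives flatness of $\varphi$, Proposition~\ref{magic}-\textbf{b}) gives $\lambda(\varphi^n)=\lambda(\varphi)^n$, and the computation in \textbf{b}) $\Rightarrow$ \textbf{c}) gives $\lambda(\varphi)=q(\varphi)$, so the ratio in Definition~\ref{HKM} is constantly $1$. (The only cosmetic remark: in Proposition~\ref{magic}-\textbf{b}) only the flatness of $g=\varphi$ is needed, so you need not argue that $\varphi^{n-1}$ is flat.)
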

\bibliographystyle{amsplain}

\end{document}